\documentclass[a4paper,11pt]{amsart}
\usepackage{graphicx}
 \usepackage{mathptmx}
\usepackage{amsmath}
\usepackage{amssymb}
\usepackage{enumitem}
\usepackage{xcolor}

\newmuskip\pFqmuskip

\newcommand*\pFq[6][8]{%
  \begingroup 
  \pFqmuskip=#1mu\relax
  \mathcode`=\string"8000
  \begingroup\lccode`\~=`\,
  \lowercase{\endgroup\let~}\pFqcomma
  F^{#2}_{#3}{\left(\genfrac..{0pt}{}{#4}{#5}\bigg|#6\right)}%
  \endgroup
}
\newcommand{\pFqcomma}{\mskip\pFqmuskip}

\newtheorem{theorem}{Theorem}[section]

\newtheorem{remark}[theorem]{Remark}

\begin{document}

\title[A note on Leibniz rule for difference quotient]{A note on Leibniz rule for difference quotient}

\author{Taekyun Kim}
\address{Department of Mathematics, Kwangwoon University, Seoul 139-701, Republic of Korea}
\email{tkkim@kw.ac.kr}
\author{Dae San Kim}
\address{Department of Mathematics, Sogang University, Seoul 121-742, Republic of Korea}
\email{dskim@sogang.ac.kr}

\subjclass[2010]{65Q10}
\keywords{Leibniz rule; difference quotient}

\maketitle

\begin{abstract}
In this note, we derive a Leibniz rule for difference quotient.
\end{abstract}

\section{Leibniz rule for difference quotient}

The Finite Difference Method (FDM) is a numerical technique used to solve differential equations by approximating continuous derivatives with discrete algebraic equations. The process begins by dividing the continuous domain (like a string or a metal plate) into a grid of discrete points (nodes) separated by a fixed distance, $\Delta x$ (or $h$). Instead of finding a smooth function, we solve for the values at these specific points. FDM replaces derivatives with difference quotients derived from Taylor series expansions. These formulas use the values at neighboring grid points to estimate the slope or curvature. By substituting these quotients into a differential equation, the problem is transformed into a system of linear equations (see [8]). This note is written with certain applications to degenerate versions of some special polynomials and numbers in mind. For this, the reader may refer to some of the recent papers on this topic (see [1-7]).\par
In this note, we prove a Leibniz rule for two functions in Theorem 1.1 and then extend it to any number of functions in Theorem 1.3.
We denote the difference quotient of the function $f$ by
\begin{equation}\label{1}
(\delta_{\lambda}f)(x)=\frac{f(x+\lambda)-f(x)}{\lambda},
\end{equation}
where $\lambda$ is any nonzero real number and $f$ is any complex-valued function defined on the real line.
We note that $\delta_{\lambda}$ is linear. First, we derive the product rule for the difference quotient in \eqref{1}:

\begin{align}
\delta_{\lambda}(fg)(x)&=\frac{1}{\lambda}\big(f(x+\lambda)g(x+\lambda)-f(x)g(x)\big) \label{2}\\
&=\frac{1}{\lambda}\big(f(x+\lambda)g(x+\lambda)-f(x)g(x+\lambda)+f(x)g(x+\lambda)-f(x)g(x)\big) \nonumber\\
&=\frac{1}{\lambda}\big(f(x+\lambda)-f(x)\big)g(x+\lambda)+f(x)\big(g(x+\lambda)-g(x)\big) \nonumber\\
&=g(x+\lambda)\bigg(\frac{f(x+\lambda)-f(x)}{\lambda}\bigg)+f(x)\bigg(\frac{g(x+\lambda)-g(x)}{\lambda}\bigg) \nonumber\\
&=g(x+\lambda)(\delta_{\lambda}f)(x)+f(x)(\delta_{\lambda}g)(x) \nonumber \\
&=(\delta_{\lambda}f)(x)(\lambda (\delta_{\lambda}g)(x)+g(x))+f(x)(\delta_{\lambda}g)(x)\nonumber\\
&=\lambda (\delta_{\lambda}f)(x)(\delta_{\lambda}g)(x)+(\delta_{\lambda}f)(x)g(x)+f(x)(\delta_{\lambda}g)(x). \nonumber
\end{align}
The product rule for difference quotient in \eqref{2} is stated as:
\begin{equation}\label{3}
\delta_{\lambda}(fg)=\lambda(\delta_{\lambda} f)(\delta_{\lambda} g)+(\delta_{\lambda} f)g+f(\delta_{\lambda} g).
\end{equation}
We prove the following Leibniz rule for difference quotient by induction. 
\begin{theorem}
For any positive integer $r$, we have the Leibniz rule for difference quotient given by
\begin{equation} \label{4}
\delta_{\lambda}^{r}(fg)=\sum_{l=0}^{r}\lambda^{r-l}\binom{r}{l}\sum_{k=0}^{l}\binom{l}{k}(\delta_{\lambda}^{r-k}f)(\delta_{\lambda}^{k+r-l}g).
\end{equation}
\begin{proof}
In the course of this proof, we denote $\delta_{\lambda}$ simply by $\delta$. For $r=1$, this is just the formula in \eqref{3}. Assume that \eqref{4} holds for $r$. 
Applying the formula \eqref{3} to $(\delta^{r-k}f)(\delta^{k+r-l}g)$ yields:
\begin{align}
\delta\big( (\delta^{r-k}f)(\delta^{k+r-l}g) \big)&=\lambda (\delta^{r-k+1}f)(\delta^{k+r-l+1}g)  \label{5} \\
&\quad+(\delta^{r-k+1}f)(\delta^{k+r-l}g)+(\delta^{r-k}f)(\delta^{k+r-l+1}g). \nonumber
\end{align}
Using the linearity of  $\delta$ and \eqref{5}, applying $\delta$ to \eqref{4} results in the following sums:
\begin{align}
\delta^{r+1}(fg)&=\sum_{l=0}^{r}\lambda^{r-l+1}\binom{r}{l}\sum_{k=0}^{l}\binom{l}{k}(\delta^{r-k+1}f)(\delta^{k+r-l+1}g) \nonumber \\
&\quad+\sum_{l=0}^{r}\lambda^{r-l}\binom{r}{l}\sum_{k=0}^{l}\binom{l}{k}(\delta^{r-k+1}f)(\delta^{k+r-l}g) \label{6} \\
&\quad+\sum_{l=0}^{r}\lambda^{r-l}\binom{r}{l}\sum_{k=0}^{l}\binom{l}{k}(\delta^{r-k}f)(\delta^{k+r-l+1}g). \nonumber
\end{align} 
Now, we note that 
\begin{align}
&\sum_{k=0}^{l}\binom{l}{k}(\delta^{r-k+1}f)(\delta^{k+r-l}g)+\sum_{k=0}^{l}\binom{l}{k}(\delta^{r-k}f)(\delta^{k+r-l+1}g) \nonumber\\
&=\sum_{k=0}^{l}\binom{l}{k}(\delta^{r-k+1}f)(\delta^{k+r-l}g)+\sum_{k=1}^{l+1}\binom{l}{k-1}(\delta^{r-k+1}f)(\delta^{k+r-l}g) \nonumber\\
&=(\delta^{r+1}f)(\delta^{r-l}g)+(\delta^{r-l}f)(\delta^{r+1}g) \label{7}\\
&\quad +\sum_{k=1}^{l}\Big\{\binom{l}{k}+\binom{l}{k-1}\Big\}(\delta^{r-k+1}f)(\delta^{k+r-l}g) \nonumber\\
&=\sum_{k=0}^{l+1}\binom{l+1}{k}(\delta^{r-k+1}f)(\delta^{k+r-l}g).\nonumber
\end{align}
By \eqref{6} and \eqref{7}, after rearranging the terms, we see that
\begin{align*}
\delta^{r+1}(fg)&=\sum_{l=0}^{r}\lambda^{r-l+1}\binom{r}{l}\sum_{k=0}^{l}\binom{l}{k}(\delta^{r-k+1}f)(\delta^{k+r-l+1}g) \\
&\quad+\sum_{l=0}^{r}\lambda^{r-l}\binom{r}{l}\sum_{k=0}^{l+1}\binom{l+1}{k}(\delta^{r-k+1}f)(\delta^{k+r-l}g) \\
&=\sum_{l=0}^{r}\lambda^{r-l+1}\binom{r}{l}\sum_{k=0}^{l}\binom{l}{k}(\delta^{r-k+1}f)(\delta^{k+r-l+1}g)  \\
&\quad+\sum_{l=1}^{r+1}\lambda^{r-l+1}\binom{r}{l-1}\sum_{k=0}^{l}\binom{l}{k}(\delta^{r-k+1}f)(\delta^{k+r-l+1}g) \\
&=\lambda^{r+1}(\delta^{r+1}f)(\delta^{r+1}g)+\sum_{k=0}^{r+1}\binom{r+1}{k}(\delta^{r-k+1}f)(\delta^{k}g) \\
&\quad+\sum_{l=1}^{r}\lambda^{r-l+1}\Big\{\binom{r}{l}+\binom{r}{l-1}\Big\}\sum_{k=0}^{l}\binom{l}{k}(\delta^{r-k+1}f)(\delta^{k+r-l+1}g) \\
&=\sum_{l=0}^{r+1}\lambda^{r-l+1}\binom{r+1}{l}\sum_{k=0}^{l}\binom{l}{k}(\delta^{r-k+1}f)(\delta^{k+r-l+1}g),
\end{align*}
which shows the validity of \eqref{4} for $r+1$.
\end{proof}
\end{theorem}
\begin{remark}
Taking $\lambda \rightarrow 0$, we recover the classical Leibniz rule:
\begin{equation*}
\Big(\frac{d}{dx}\Big)^{r}(f(x)g(x))=\sum_{k=0}^{r}\binom{r}{k}f^{(r-k)}(x)g^{(k)}(x).
\end{equation*}
\end{remark}

To extend the rule Theorem 1.1 for more than two functions, we introduce the operator:
\begin{equation}
L_{\lambda}=I+\lambda \delta_{\lambda}. \label{8}
\end{equation}
Multiplying both sides of \eqref{3} by $\lambda$ and then adding $fg$ to the resulting equation yield:
\begin{align*}
L_{\lambda}(fg)=fg+\lambda \delta_{\lambda}(fg)&=fg+\lambda^{2}(\delta_{\lambda} f)(\delta_{\lambda} g)+\lambda(\delta_{\lambda} f)g+\lambda f(\delta_{\lambda} g) \\
&=(f+\lambda \delta_{\lambda}f)(g+\lambda \delta_{\lambda}g)=L_{\lambda}(f)L_{\lambda}(g).
\end{align*}
This shows that the equation \eqref{3} is equivalent to saying the operator $L_{\lambda}$ is multiplicative on the algebra of complex-valued functions on the real line. Then, by induction, it is immediate to see that, for any positive integers $r$ and $n$,
\begin{equation}
L_{\lambda}^{r}(f_{1}f_{2}\cdots f_{n})=L_{\lambda}^{r}(f_{1})L_{\lambda}^{r}(f_{2})\cdots L_{\lambda}^{r}(f_{n}). \label{9}
\end{equation}
Noting that $\delta_{\lambda}=\frac{L_{\lambda}-I}{\lambda}$ and using \eqref{9}, we have:
\begin{align}
\delta_{\lambda}^{r}(f_{1}f_{2}\cdots f_{n})&=\frac{1}{\lambda^{r}}(L_{\lambda}-I)^{r}(f_{1}f_{2}\cdots f_{n}) \nonumber \\
&=\frac{1}{\lambda^{r}}\sum_{k=0}^{r}\binom{r}{k}(-1)^{r-k}L_{\lambda}^{k}(f_{1}f_{2}\cdots f_{n}) \label{10} \\
&=\frac{1}{\lambda^{r}}\sum_{k=0}^{r}\binom{r}{k}(-1)^{r-k}L_{\lambda}^{k}(f_{1})L_{\lambda}^{k}(f_{2})\cdots L_{\lambda}^{k}(f_{n}), \nonumber
\end{align}
where, for any $i=1,2, \dots, n$,
\begin{equation*}
L_{\lambda}^{k}(f_{i})=(I+\lambda \delta_{\lambda})^{k}(f_{i})=\sum_{j_{i}=0}^{k}\binom{k}{j_{i}}\lambda^{j_{i}}\delta_{\lambda}^{j_{i}}f_{i}.
\end{equation*}
For $r=1$ and $n=3$, \eqref{10} gives:
\begin{align*}
\delta_{\lambda}(f_{1}f_{2}f_{3})&=\frac{1}{\lambda}\sum_{k=0}^{1}\binom{1}{k}(-1)^{1-k}L_{\lambda}^{k}(f_{1})L_{\lambda}^{k}(f_{2})L_{\lambda}^{k}(f_{3}) \\
&=\frac{1}{\lambda}\Big[(f_{1}+\lambda \delta_{\lambda}f_{1})(f_{2}+\lambda \delta_{\lambda}f_{2})(f_{3}+\lambda \delta_{\lambda}f_{3})-f_{1}f_{2}f_{3}\Big].
\end{align*}
For $r=2$ and $n=3$, \eqref{10} yields:
\begin{align*}
\delta_{\lambda}^{2}(f_{1}f_{2}f_{3})&=\frac{1}{\lambda^{2}}\sum_{k=0}^{2}\binom{2}{k}(-1)^{2-k}L_{\lambda}^{k}(f_{1})L_{\lambda}^{k}(f_{2})L_{\lambda}^{k}(f_{3})\\
&=\frac{1}{\lambda^{2}}\Big[L_{\lambda}^{2}(f_{1})L_{\lambda}^{2}(f_{2})L_{\lambda}^{2}(f_{3})-2L_{\lambda}(f_{1})L_{\lambda}(f_{2})L_{\lambda}(f_{3})+f_{1}f_{2}f_{3}\Big] \\
&= \frac{1}{\lambda^{2}} \Big[ \prod_{i=1}^3 (f_{i }+ 2\lambda \delta_{\lambda}f_{i} + \lambda^2 \delta_{\lambda}^{2} f_{i}) - 2 \prod_{i=1}^3 (f_{i }+ \lambda \delta_{\lambda}f_{i}) + f_{1} f_{2} f_{3} \Big].
\end{align*}
We state the result in \eqref{10} as a theorem.
\begin{theorem}
For any positive integers $r$ and $n$, we have the Leibniz rule for the different quotient given by
\begin{equation}
\delta_{\lambda}^{r}(f_{1}f_{2}\cdots f_{n})=\frac{1}{\lambda^{r}}\sum_{k=0}^{r}\binom{r}{k}(-1)^{r-k}L_{\lambda}^{k}(f_{1})L_{\lambda}^{k}(f_{2})\cdots L_{\lambda}^{k}(f_{n}), \label{11}
\end{equation}
\end{theorem}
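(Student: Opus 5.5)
The plan is to reduce everything to the operator identity $\delta_{\lambda}=\lambda^{-1}(L_{\lambda}-I)$ and to the multiplicativity of $L_{\lambda}$. First I will check that $L_{\lambda}$ is just the shift operator: by \eqref{1}, $(L_{\lambda}f)(x)=f(x)+\bigl(f(x+\lambda)-f(x)\bigr)=f(x+\lambda)$. This makes multiplicativity transparent, $L_{\lambda}(fg)=L_{\lambda}(f)L_{\lambda}(g)$, and it agrees with the computation already derived from \eqref{3}. A simple induction on $n$ extends this to $L_{\lambda}(f_{1}\cdots f_{n})=L_{\lambda}(f_{1})\cdots L_{\lambda}(f_{n})$. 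The inductive step applies the two-factor case to $(f_{1}\cdots f_{n-1})\cdot f_{n}$.

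Next I will iterate in $r$. Since each $L_{\lambda}(f_i)$ is again a complex-valued function on $\mathbb{R}$, applying $L_{\lambda}$ to $L_{\lambda}^{r-1}(f_1)\cdots L_{\lambda}^{r-1}(f_n)$ and using the $n$-factor multiplicativity gives \eqref{9} for all $r\ge 1$. The case $k=0$ of \eqref{9} holds trivially with $L_{\lambda}^{0}=I$.

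Finally I will expand $\delta_{\lambda}^{r}=\lambda^{-r}(L_{\lambda}-I)^{r}$. The operators $L_{\lambda}$ and $I$ are linear and commute, so the binomial theorem in the algebra of linear operators gives
\[
\delta_{\lambda}^{r}=\frac{1}{\lambda^{r}}\sum_{k=0}^{r}\binom{r}{k}(-1)^{r-k}L_{\lambda}^{k}.
\]
Applying this to $f_{1}\cdots f_{n}$ and substituting \eqref{9} termwise, including the $k=0$ term, yields \eqref{11}.

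There is no real obstacle here. The only points needing care are the linearity of $\delta_{\lambda}$, which justifies treating $\delta_{\lambda}^{r}$ as the $r$-th power of the operator $\lambda^{-1}(L_{\lambda}-I)$, and the commutativity of $L_{\lambda}$ with $I$, which is what permits the binomial expansion.
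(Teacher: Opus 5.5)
Your proposal is correct and follows the paper's route. Like the paper, you use multiplicativity of $L_{\lambda}$, iterate it to get \eqref{9}, and then expand $\lambda^{-r}(L_{\lambda}-I)^{r}$ binomially. The one difference is that you get multiplicativity by observing that $L_{\lambda}$ is the shift $f(x)\mapsto f(x+\lambda)$, whereas the paper derives it from the product rule \eqref{3}; your version is the more transparent justification of the same step.
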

\noindent where, for any $i=1,2, \dots, n$,
\begin{equation*}
L_{\lambda}^{k}(f_{i})=(I+\lambda \delta_{\lambda})^{k}(f_{i})=\sum_{j_{i}=0}^{k}\binom{k}{j_{i}}\lambda^{j_{i}}\delta_{\lambda}^{j_{i}}f_{i}. 
\end{equation*}

\begin{remark}
(a) The presence of minus signs in \eqref{11} tells us that there are many cancellations. \\
(b) We write \eqref{11} as
\begin{equation}
\lambda^{r} \delta_{\lambda}^{r}(f_{1}f_{2}\cdots f_{n})=\sum_{k=0}^{r}\binom{r}{k}(-1)^{r-k}L_{\lambda}^{k}(f_{1}f_{2}\cdots f_{n}). \label{12}
\end{equation}
Then, by binomial inversion, we also have
\begin{equation}
L_{\lambda}^{r}(f_{1}f_{2}\cdots f_{n})=\sum_{k=0}^{r}\binom{r}{k}\lambda^{k} \delta_{\lambda}^{k}(f_{1}f_{2}\cdots f_{n}). \label{13}
\end{equation}
For each positive integer $n$, we let
\begin{equation*}
A_{n}(t)=\sum_{r=0}^{\infty}\lambda^{r}\delta_{\lambda}^{r}(f_{1}f_{2}\cdots f_{n})\frac{t^{r}}{r!}, \quad \bar{A}_{n}(t)=\sum_{r=0}^{\infty} L^{r}(f_{1}f_{2} \cdots f_{n})\frac{t^{r}}{r!}.
\end{equation*}
Then, as we can check directly or by the basic fact on Euler-Seidel matrix, the two binomial relations in \eqref{12} and \eqref{13} can be concisely expressed as follows:
\begin{equation*}
\bar{A}_{n}(t)=e^{t}A_{n}(t).
\end{equation*}
\end{remark}


\begin{thebibliography}{9} 
\bibitem{1}
Kim, D.S.; Kim, T. \emph{Combinatorial identities related to degenerate Stirling numbers of the second Kind,} Proc. Steklov Inst. Math. \textbf{330} (2025), 176-192. https://doi.org/10.1134/S0081543825600279
\bibitem{2}
Kim, D. S.; Kim, T. \emph{Degenerate Sheffer sequences and $\lambda$-Sheffer sequences,} J. Math. Anal. Appl. \textbf{493} (2021), no. 1, Paper No. 124521, 21 pp.
\bibitem{3}
Kim, T.; Kim, D. S. \emph{Degenerate Bernstein polynomials,} Rev. R. Acad. Cienc. Exactas Fs. Nat. Ser. A Mat. RACSAM 113 (2019), no. 3, 2913-2920.
\bibitem{4}
Kim, T.; Kim, D. S. \emph{Spivey-type recurrence relations for degenerate Bell and Dowling polynomials,} Russ. J. Math. Phys. \textbf{32} (2025), no. 2, 288-296.
\bibitem{5} 
Kim, T.; Kim, D. S. \emph{Heterogeneous Stirling numbers and heterogeneous Bell polynomials,} Russ. J. Math. Phys. \textbf{32} (2025), no. 3, 498-509.
\bibitem{6}
Kim, T.; Kim, D. S. \emph{Recurrence relations for degenerate Bell and Dowling polynomials via Boson operators,} Comput. Math. Math. Phys. \textbf{65} (2025), no. 9, 2087-2096.
\bibitem{7}
Kim, W. J.; Kim, D. S.; Kim, H. Y.; Kim, T. \emph{Some identities of degenerate Euler polynomials associated with degenerate Bernstein polynomials,} J. Inequal. Appl. 2019, Paper No. 160, 11pp.
\bibitem{8}
LeVeque, R. J. \emph{Finite difference methods for ordinary and partial differential equations: steady-state and time-dependent problems,} Society for Industrial and Applied Mathematics (SIAM), Philadelphia, 2007.
\end{thebibliography}
\end{document}